\theoremstyle{plain}
\newtheorem{definition}{Definition}[section]
\newtheorem{theorem}[definition]{Theorem}
\newtheorem*{theorem*}{Theorem}
\newtheorem*{remark*}{Remark}
\newtheorem*{sideremark*}{Side Remark}\newtheorem*{mt*}{Main Theorem}
\newtheorem*{claim*}{Claim}
\newtheorem*{q*}{Question}
\newtheorem{lemma}[definition]{Lemma}
\newtheorem*{corollary*}{Corollary}
\newtheorem*{proposition*}{Proposition}
\newcommand{\R}{\mathbb{R}}
\newcommand{\na}{\nabla}
\newcommand{\dd}{{\rm d}}
\newcommand{\p}{\partial}
\newcommand{\map}{\rightarrow}
\newcommand{\G}{\Gamma}
\newcommand{\xyhat}{\widehat{x-y}}
\newcommand{\uu}{{u \otimes u}}
\newcommand{\dv}{{\rm div}}
\newcommand{\obig}{\omega^{(>)}}
\newcommand{\osmall}{\omega^{(<)}}
\newcommand{\scal}{\mathcal{S}}
\newcommand{\what}{\widehat{\omega}}
\newcommand{\sbig}{{\scal}^{(>)}}
\newcommand{\ssmall}{{\scal}^{(<)}}
\newcommand{\wbig}{{\what}^{(>)}}
\def\XXint#1#2#3{{\setbox0=\hbox{$#1{#2#3}{\int}$ }
\vcenter{\hbox{$#2#3$ }}\kern-.6\wd0}}
\numberwithin{equation}{section}
\numberwithin{figure}{section}
\title{On vortex alignment and boundedness of \\$L^q$ norm of vorticity}
\author{Siran Li}
\address{Siran Li: Department of Mathematics, Rice University, MS 136
P.O. Box 1892, Houston, Texas, 77251-1892, USA; \, $\bullet$ \,  Department of Mathematics, McGill University, Burnside Hall, 805 Sherbrooke Street West, Montreal, Quebec, H3A 0B9, Canada;\, $\bullet$ \,  Centre de Recherches Math\'{e}matiques, Universit\'{e} de Montr\'{e}al, P.O. Box 6128, Centre-ville Station. Montr\'{e}al, Qu\'{e}bec, H3C 3J7, Canada.}
\email{\texttt{Siran.Li@rice.edu}}
\date{\today}
\begin{document}

\maketitle

\begin{abstract}
We show that the spatial $L^q$ ($q>5/3$) norm of the vorticity of an incompressible viscous fluid in $\R^3$ or $\mathbb{T}^3$ remains bounded uniformly in time, provided that the direction of vorticity is H\"{o}lder continuous in the space variable, and that the space--time $L^q$ norm of the vorticity is finite. The H\"{o}lder index depends only on $q$. This serves as a variant of the classical result by P. Constantin and Ch. Fefferman (Direction of vorticity and the problem of global regularity for the Navier--Stokes equations, \textit{Indiana Univ. J. Math.}, \textbf{42} (1993), 775--789).

\end{abstract}

\section{Introduction}
In this paper we consider the Cauchy problem of incompressible Navier--Stokes equations:
\begin{eqnarray}
&\p_t u + \dv\,\uu - \nu \Delta u +\na p=0 \qquad \text{ in } ]0,T]\times \Omega,\label{NS eqn}\\
&\dv\, u = 0 \qquad \text{ in } ]0,T]\times \Omega,\label{incompressibility}\\
&u|_{t=0}=u_0 \qquad \text{ on } \{0\}\times \Omega,\label{initial data}
\end{eqnarray}
where $\Omega = \R^3$ or $\mathbb{T}^3$. The constant $\nu>0$ is the viscosity, $u:\Omega\map\R^3$ the velocity, and $p:\Omega\map\R$ the pressure of the fluid. The existence, uniqueness and regularity of Eqs.\,\eqref{NS eqn}--\eqref{initial data} has been  a central research topic of nonlinear PDEs; see Fefferman \cite{f}, Constantin--Foias \cite{cfs}, Seregin \cite{s} and many other references.

The vorticity $\omega:=\na\times u$ is an important quantity for the fluid motion. Its time evolution is determined by the {\em vorticity equation}, which can be obtained by taking the curl of Eq.\,\eqref{NS eqn}:
\begin{equation}
\p_t \omega + (u\cdot \na)\omega -\nu\Delta\omega = \scal \cdot \omega,
\end{equation} 
where $\scal$ is the $3\times 3$ matrix
\begin{equation}
\scal := \frac{\na u + \na^\top u}{2}.
\end{equation}
The alignment of the vorticity is closely related to the regularity of the weak solutions to the Navier--Stokes equations. A celebrated result by Constantin--Fefferman (\cite{cf}) shows that, if the vorticity direction does not change too rapidly in the regions  with high vorticity magnitude, then a weak solution is automatically strong.  More precisely, denote by
\begin{equation}\label{angle}
\varphi(t,x,y):=\angle \,(\omega(t,x), \omega(t,y)).
\end{equation} 
If there exist $\Lambda, \rho>0$ such that 
\begin{equation}\label{cf condition}
|\sin \varphi(t,x,y)| \leq \frac{|x-y|}{\rho}
\end{equation}
whenever $|\omega(t,x)|, |\omega(t,y)| \geq \Lambda$, then a weak solution $u$ on $[0,T]$ must be a classical solution on $[0,T]$. Here, weak solutions are defined in the Leray--Hopf sense: $u \in L^\infty(0,T; L^2(\Omega)) \cap L^2(0,T; H^1(\Omega))$ with the energy inequality
\begin{equation}\label{energy inequality}
\frac{1}{2}\int |u(t,x)|^2\,\dd x + \nu \int_0^t \int |\na u(s,x)|^2\,\dd s\dd x \leq \frac{1}{2} \int |u_0(t)|^2\,\dd x.
\end{equation}
Throughout the paper, $\int$ without subscripts denotes the integration over $\Omega$, and $\|\cdot\|_{L^q}\equiv \|\cdot\|_{L^q(\Omega)}$.

The above result (Constantin--Fefferman \cite{cf}) is established by showing 
\begin{equation}\label{vorticity growth}
\omega \in L^\infty\big(0,T; L^2(\Omega)\big) \cap L^2(0,T; H^1\big(\Omega)\big),
\end{equation}
which together with Eq.\,\eqref{incompressibility} implies that $u$ is classical. Using more refined estimates, Beir\~{a}o da Veiga and Berselli improved the Lipschitz condition \eqref{cf condition} in \cite{cf} to a H\"{o}lder condition:
\begin{equation}\label{holder assumption}
|\sin \varphi(t,x,y)| \leq \frac{|x-y|^\beta}{\rho}, \qquad \text{ where } \beta \in \big[\frac{1}{2},1\big].
\end{equation}
The H\"{o}lder exponent $\beta =1/2$ is the best up to date. There is an extensive literature on the geometric regularity conditions {\it \`{a} la} Constantin--Fefferman; see Beir\~{a}o da Veiga--Berselli \cite{bb1, bb2}, Beir\~{a}o da Veiga \cite{b1, b2, b3}, Berselli \cite{berselli}, Chae \cite{chae}, Chae--Kang--Li \cite{ckl}, Giga--Miura \cite{gm}, Gruji\'{c} \cite{grujic}, Vasseur \cite{vasseur} and Zhou \cite{zhou}, as well as the references cited therein. Similar conditions for the Euler equations have also been studied; {\it cf.} Constantin--Fefferman--Majda \cite{cfm}.

This paper serves as a variant of the above results in \cite{cf, bb1}. In comparison with Eq.\,\eqref{vorticity growth} concerning the growth of the $L^2$ norm of vorticity $\omega$, we shall study the growth of the $L^q$ norm of $\omega$ under assumptions of the form Eq.\,\eqref{holder assumption}, in which the H\"{o}lder exponent depends on $q$. More precisely, the main result of the paper is as follows:
\begin{theorem}\label{thm: main}
Let $u:\Omega \times [0,T] \map \R^3$ be a weak solution to Eqs.\,\eqref{NS eqn}--\eqref{initial data}, $\Omega = \R^3$ or $\mathbb{T}^3$. Assume that, for $q > 5/3$, there exist $\Lambda, \rho>0$ such that 
\begin{equation}
|\sin \varphi(t,x,y)| \leq \frac{|x-y|^\beta}{\rho} \qquad \text{ where } \beta \in \Big]\max\Big\{0, \frac{5}{q}-2\Big\}, 1\Big]
\end{equation}
whenever $|\omega(t,x)|, |\omega(t,y)| \geq \Lambda$; the angle $\varphi$ is as in Eq.\,\eqref{angle}. In addition, suppose that $\omega\in L^q(\Omega\times [0,T])$. Then
\begin{equation}
\omega \in L^\infty\big(0,T; L^q(\Omega)\big) \qquad \text{ and } \qquad |\omega|^{q/2} \in L^2\big(0,T; H^1(\Omega)\big).
\end{equation}
\end{theorem}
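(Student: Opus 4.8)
The plan is to run an $L^q$ energy estimate on the vorticity equation, with the nonlinear stretching term $\int |\omega|^{q-2}\,(\scal\cdot\omega)\cdot\omega$ being the only dangerous contribution. First I would multiply the vorticity equation $\p_t\omega + (u\cdot\na)\omega - \nu\Delta\omega = \scal\cdot\omega$ by $|\omega|^{q-2}\omega$ and integrate over $\Omega$. The transport term $\int (u\cdot\na)\omega\cdot|\omega|^{q-2}\omega = \frac1q\int u\cdot\na(|\omega|^q)$ vanishes by $\dv\,u=0$. The viscous term produces, after the standard computation,
\begin{equation}
-\nu\int \Delta\omega\cdot|\omega|^{q-2}\omega \;\geq\; \frac{4\nu(q-1)}{q^2}\int \big|\na |\omega|^{q/2}\big|^2 \;=:\; c_q\,\nu\,\big\||\omega|^{q/2}\big\|_{\dot H^1}^2,
\end{equation}
which is exactly the dissipation needed to control the statement's second conclusion. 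Thus we arrive at
\begin{equation}
\frac1q\frac{\dd}{\dd t}\|\omega\|_{L^q}^q + c_q\nu\big\||\omega|^{q/2}\big\|_{\dot H^1}^2 \;\leq\; \int |\omega|^{q-2}\,\big(\scal\cdot\omega\big)\cdot\omega,
\end{equation}
and everything reduces to estimating the right-hand side.

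The heart of the matter is the Constantin--Fefferman representation of $\scal$ as a singular integral of $\omega$ together with the geometric cancellation coming from the H\"older hypothesis. I would recall that the deformation tensor can be written (modulo harmless lower-order terms on $\mathbb{T}^3$, and exactly on $\R^3$) as a principal-value integral
\begin{equation}
\big(\scal(x)\cdot\omega(x)\big)\cdot\omega(x) = \mathrm{p.v.}\int P\big(\widehat{x-y}\big)\,\frac{\det\big(\widehat{x-y},\,\omega(y),\,\what(x)\big)}{|x-y|^3}\,|\omega(x)|\,|\omega(y)|\,\dd y,
\end{equation}
where $\what=\omega/|\omega|$, $P$ is a bounded kernel, and the determinant factor is bounded by $|\sin\varphi(x,y)|$ — this is the crucial point, since it kills the leading singularity wherever $|\omega(x)|,|\omega(y)|\geq\Lambda$. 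Splitting into the region where both magnitudes exceed $\Lambda$ (use $|\sin\varphi|\le|x-y|^\beta/\rho$, gaining $|x-y|^{\beta-3}$, which is integrable near the diagonal once $\beta>0$ after pairing against $|\omega(y)|$) and its complement (where one magnitude is $\le\Lambda$, so the integrand is controlled by $\Lambda|\omega|$ against the Riesz-type kernel $|x-y|^{-3}$, handled via the usual principal-value/Calder\'on--Zygmund bound), one obtains a pointwise-in-$x$ bound of the schematic form
\begin{equation}
\big|\big(\scal\cdot\omega\big)\cdot\omega\big|(x) \;\lesssim\; |\omega(x)|\,\Big( \big(|\,\cdot\,|^{\beta-3}\1_{B}\big)*|\omega|\,\Big)(x)\;+\; \Lambda\,|\omega(x)|\,\big(M|\omega|\big)(x) + \text{l.o.t.},
\end{equation}
after which $\int |\omega|^{q-2}$ times this is estimated by H\"older's inequality in $x$.

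The main obstacle — and where the constraints $q>5/3$ and $\beta>5/q-2$ enter — is absorbing the top-order term into the dissipation $\nu\big\||\omega|^{q/2}\big\|_{\dot H^1}^2$. Writing $f:=|\omega|^{q/2}$, one needs to bound $\int |\omega|^{q-1}\big((|\cdot|^{\beta-3}\1_B)*|\omega|\big)$ by (fractional integration / Young's inequality, since $|\cdot|^{\beta-3}\1_B$ is in a Lorentz/weak-$L^{3/(3-\beta)}$ space locally) something like $\|f\|_{L^{2}}^{?}\|f\|_{\dot H^1}^{?}$ plus lower order, using the Sobolev embedding $\dot H^1(\R^3)\emb L^6$ and interpolation between $L^2$ and $L^6$ for $f$; tracking the exponents, the convolution against $|\cdot|^{\beta-3}$ costs $3-\beta$ derivatives' worth of scaling, and the resulting power of $\|f\|_{\dot H^1}$ stays strictly below $2$ precisely when $\beta>5/q-2$ and $q>5/3$, so Young's inequality $ab\le\e a^{p'}+C_\e b^{p}$ lets us move that term to the left. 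I would then close with Gr\"onwall: the surviving lower-order terms are controlled by $C(1+\|\omega\|_{L^q}^q)$ (using $\omega\in L^q_{t,x}$ to handle the $L^\infty_t$–integrability of the forcing constants), yielding $\omega\in L^\infty_t L^q_x$ and, upon reintegrating, $|\omega|^{q/2}\in L^2_tH^1_x$. A minor technical point to address along the way is justifying these manipulations for merely Leray--Hopf weak solutions — this is done by performing the estimates on a smooth approximation (or after mollification), establishing the a priori bound, and passing to the limit, which also requires the a priori finiteness $\omega\in L^q_{t,x}$ assumed in the hypothesis.
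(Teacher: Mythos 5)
Your overall architecture coincides with the paper's: the $L^q$ vorticity energy estimate (quoted in the paper from Qian as Lemma \ref{lemma: qian}), the Constantin--Fefferman singular-integral representation of $\scal$ with the determinant factor depleted by the H\"older hypothesis to a kernel $|x-y|^{\beta-3}$, fractional integration (Hardy--Littlewood--Sobolev) combined with Gagliardo--Nirenberg interpolation to absorb the resulting top-order term into the dissipation, and a Gr\"onwall argument exploiting $\omega\in L^q(\Omega\times[0,T])$ to close. The constraints $q>5/3$ and $\beta>5/q-2$ do emerge exactly where you say they do, although you assert the exponent bookkeeping rather than carry it out; the paper's Lemma \ref{lemma: J}, the condition $(\clubsuit)$, and the case analysis over $p\in\,]1,3[$ in Subsection 3.5 are precisely that computation, and it is the only genuinely delicate part of the argument, so a complete write-up would have to supply it.

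One step as written would fail. Your schematic pointwise bound for the ``complement'' region, $\Lambda\,|\omega(x)|\,(M|\omega|)(x)$, undercounts the powers of $|\omega(x)|$. The dangerous piece of the complement is where $|\omega(x)|\geq\Lambda$ is large but the $y$-integration runs over $\{|\omega(y)|<\Lambda\}$: there the geometric cancellation is unavailable, and after pairing with $|\omega|^{q-2}$ the contribution is $|\omega(x)|^{q}\,|\T\osmall(x)|$, with $\T$ the Calder\'on--Zygmund operator and $\osmall$ the truncated (small) part of the vorticity --- \emph{two} factors of the large $|\omega(x)|$, not one, so it is not majorised by $\Lambda\|\omega\|_{L^q}^q$. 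This is the paper's $Y$ term; it is handled by
\begin{equation*}
\int|\omega|^q\,|\T\osmall|\,\dd x\;\leq\;\big\||\omega|^{q/2}\big\|_{L^4}^2\,\|\T\osmall\|_{L^2}\;\leq\;C\sqrt{\Lambda\G}\,\big\||\omega|^{q/2}\big\|_{L^4}^2,
\end{equation*}
followed by the Ladyzhenskaya inequality and a \emph{second} absorption of part of the result into the viscous term $\nu\int|\na(|\omega|^{q/2})|^2\,\dd x$. The fix uses only tools you already invoke elsewhere, so the proof still closes, but the bound you wrote for that region is not correct and the extra absorption step cannot be skipped.
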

In particular, for $q=2$, $\beta=1$ Theorem \ref{thm: main} recovers the result by Constantin--Fefferman \cite{cf}; and for $q=2$, $\beta=1/2$ the result by Beir\~{a}o da Veiga--Berselli \cite{bb1}. Indeed, when $q=2$ the assumption $\omega\in L^q(\Omega\times [0,T])$ is automatically verified by the energy inequality \eqref{energy inequality}.

Theorem \ref{thm: main} provides a new characterisation for the control of vorticity under suitable alignment of the vortex structures in 3D incompressible fluids. Roughly speaking, it suggests a self-improvement property from the {\em average-in-time} bound for the (spatial) $L^q$ norm of $\omega$ to the {\em uniform-in-time} bound, provided that the vorticity does not change its directions too sharply wherever its magnitude is large.

Moreover, we remark that regularity conditions for vorticity have also been established under space-time integrability conditions on the vorticity magnitude. For example, Gruji\'{c}--Ruzmaikina \cite{gr} proved that for $\beta \in [1/q, 1]$ and $\int_0^T\big(\int|\omega(t,x)|^q\,\dd x\big)^{1/(q-1)}\,\dd t <\infty$, the $L^q$ norm of $\omega$ remains bounded as $t\map T^-$. The special case $q=2$ also coincides with the result by Beir\~{a}o da Veiga--Berselli \cite{bb1}.

\section{Preliminary Identities and Estimates}

In this section we summarise several identities and inequalities that shall be used in the subsequent development. 

First of all, we recall the singular integral representation of the rate-of-strain tensor $\scal$ in terms of $\omega$, which is crucial to the arguments in  Constantin--Fefferman \cite{cf}. Denoting by $\widehat{a}:=a/|a|$ for three-vectors $a\in\R^3$, there holds (Eq.\,(4) in \cite{cf}):
\begin{equation}
\scal (t,x) = \frac{3}{8\pi} {\rm p.v.}\int \bigg\{\frac{\xyhat \otimes \big(\xyhat \times \omega(t,x)\big) + \big(\xyhat \times \omega(t,x)\big)\otimes\xyhat}{|x-y|^3}\bigg\}\,\dd y.
\end{equation}
The symbol $p.v.$ denotes the principal value of the integral. Thus, the normalised vortex stretching term $\scal:(\what\otimes\what)$ can be expressed as follows:
\begin{equation}\label{vortex stretching term}
\scal:(\what\otimes\what)(t,x)=\frac{3}{4\pi} {\rm p.v.}\int\bigg\{ \frac{D\Big(\xyhat, \what(t,x), \what(t,x-y)\Big) |\omega(t,x)|}{|x-y|^3}\bigg\}\,\dd y,
\end{equation}
where
\begin{equation}
D(e_1,e_2,e_3) := (e_1 \cdot e_3) \,\det(e_1, e_2, e_3),
\end{equation}
and $e_i$ are three-vectors (column vectors) for $i=1,2,3$. As shown on pp.778--780 in \cite{cf}, the bound for the angle  $\varphi$ can be translated to a bound for the $D$ term:
\begin{lemma}\label{lemma: geometric}
Under the assumptions of Theorem \ref{thm: main},  we have
\begin{equation}
\Big|D\Big(\xyhat, \what(t,x), \what(t,x-y)\Big)\Big| \leq \frac{|x-y|^\beta}{\rho}.
\end{equation}
\end{lemma}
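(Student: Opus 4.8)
The plan is to unwind the definition of $D$ and bound each factor. Write $e_1 = \xyhat$, $e_2 = \what(t,x)$, $e_3 = \what(t,x-y)$, so that $D(e_1,e_2,e_3) = (e_1\cdot e_3)\det(e_1,e_2,e_3)$. Each $e_i$ is a unit vector, so immediately $|e_1\cdot e_3|\le 1$ and $|\det(e_1,e_2,e_3)|\le 1$; the point is that one of these two factors is in fact small whenever the Hölder alignment hypothesis applies. First I would observe that the determinant $\det(e_1,e_2,e_3)$ is the signed volume of the parallelepiped spanned by the three unit vectors, so its absolute value is at most the area of the parallelogram spanned by $e_2$ and $e_3$, i.e. $|\det(e_1,e_2,e_3)|\le |e_2\times e_3| = |\sin\varphi(t,x,x-y)|$, using that $e_2,e_3$ are unit vectors and that the angle between $\omega(t,x)$ and $\omega(t,x-y)$ is exactly $\varphi(t,x,x-y)$ by \eqref{angle}.

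Next I would split into two cases according to whether both points lie in the high-vorticity region. If $|\omega(t,x)|\ge\Lambda$ and $|\omega(t,x-y)|\ge\Lambda$, then the hypothesis of Theorem \ref{thm: main} gives $|\sin\varphi(t,x,x-y)|\le |x-y|^\beta/\rho$, and combining with the determinant bound above yields
\begin{equation*}
\Big|D\Big(\xyhat, \what(t,x), \what(t,x-y)\Big)\Big| \le |e_1\cdot e_3|\,|\det(e_1,e_2,e_3)| \le 1\cdot \frac{|x-y|^\beta}{\rho} = \frac{|x-y|^\beta}{\rho}.
\end{equation*}
If instead one of the two magnitudes is below $\Lambda$, then $\what$ is not even well-defined there in the usual sense, and one handles this exactly as in \cite{cf}: the vortex-stretching integrand \eqref{vortex stretching term} carries the factor $|\omega(t,x)|$, and the expression $D(\cdots)|\omega(t,x)|/|x-y|^3$ is what actually appears, so on the set where $|\omega(t,x)|<\Lambda$ there is nothing to prove for the stretching term, while on the set where $|\omega(t,x)|\ge\Lambda$ but $|\omega(t,x-y)|<\Lambda$ one splits the $y$-integral and estimates that piece separately. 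For the purpose of the clean statement of the lemma one simply records the bound on the region where both directions are defined and the alignment hypothesis is in force; this is precisely the content of pp.\,778--780 of \cite{cf}.

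The only genuinely delicate point — and the one I would flag as the main obstacle — is the bookkeeping around the low-vorticity set, exactly as in the original Constantin--Fefferman argument: the quantity $D$ involves $\what(t,x-y)$ which is undefined when $\omega(t,x-y)=0$, so the inequality as stated must be understood on $\{|\omega(t,x)|\ge\Lambda\}\cap\{|\omega(t,x-y)|\ge\Lambda\}$, and the contribution of the complementary set to \eqref{vortex stretching term} has to be absorbed elsewhere (it is bounded by a constant times $\Lambda$ times a harmless kernel estimate). Everything else is elementary linear algebra: the key inequality $|\det(e_1,e_2,e_3)|\le|e_2\times e_3|$ for unit vectors, together with $|e_1\cdot e_3|\le 1$, and the identification of $|e_2\times e_3|$ with $|\sin\varphi|$.
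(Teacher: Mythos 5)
Your argument---$|\det(e_1,e_2,e_3)|=|e_1\cdot(e_2\times e_3)|\le|e_2\times e_3|=|\sin\varphi|$ for unit vectors, combined with $|e_1\cdot e_3|\le 1$ and the alignment hypothesis, applied only on the set where both vorticity magnitudes exceed $\Lambda$ (which is exactly where the lemma is invoked, since it is only used on the $Z$ term built from $\obig$)---is precisely the geometric depletion computation of Constantin--Fefferman that the paper cites in place of a proof, so this is correct and essentially the same approach. The one wrinkle is notational and lies in the paper rather than in your reasoning: applying the hypothesis literally to the pair of points $(x,\,x-y)$ gives $|\sin\varphi(t,x,x-y)|\le|y|^\beta/\rho$, not $|x-y|^\beta/\rho$; the kernel in Eq.\,\eqref{vortex stretching term} and the lemma evidently intend the second point to be $y$ (so that the relevant separation is $|x-y|$), which is the form actually used in the estimate of $J(t)$ in Subsection 3.3.
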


Next, the time-evolution of the $L^q$ norm of $\omega$ (for any $q\geq 1$) has been derived by Qian in \cite{qian}; see the proof of Lemma 2 therein: 
\begin{lemma}\label{lemma: qian}
Let $u$ be a weak solution to Eqs.\,\eqref{NS eqn}--\eqref{initial data}. Then, for $q\geq 1$, there holds
\begin{align}\label{qian}
&\frac{\dd}{\dd t}\bigg( \int |\omega(t,x)|^q\,\dd x\bigg) + \frac{4(q-1)}{q}\nu \bigg( \int \big|\na \big(|\omega(t,x)|^{q/2}\big)\big|^2\,\dd x\bigg)\,\nonumber\\
&\qquad \leq q \int |\omega(t,x)|^{q-2} \scal(t,x):\big(\omega(t,x) \otimes \omega(t,x)\big)\,\dd x.
\end{align} 
\end{lemma}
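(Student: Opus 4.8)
The plan is to obtain the inequality by testing the vorticity equation $\p_t\omega + (u\cdot\na)\omega - \nu\Delta\omega = \scal\cdot\omega$ against the vector field $|\omega|^{q-2}\omega$ and integrating over $\Omega$. The resulting identity splits into four contributions---a time-derivative term, a convective term, a viscous term, and the vortex-stretching term on the right---which I treat separately and then recombine.

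The three terms other than the viscous one are immediate. For the time derivative, the chain rule $|\omega|^{q-2}\omega\cdot\p_t\omega = \tfrac{1}{2}|\omega|^{q-2}\p_t|\omega|^2 = \tfrac{1}{q}\p_t(|\omega|^q)$ yields $\tfrac{1}{q}\tfrac{\dd}{\dd t}\int|\omega|^q\,\dd x$. For the convective term, the same identity gives $|\omega|^{q-2}\omega\cdot(u\cdot\na)\omega = \tfrac{1}{q}(u\cdot\na)(|\omega|^q)$; integrating by parts and invoking the incompressibility $\dv\,u=0$ shows that this integral vanishes. Finally, since $\scal$ is symmetric, the right-hand side equals $\int|\omega|^{q-2}\scal:(\omega\otimes\omega)\,\dd x$, which upon multiplication by $q$ is exactly the claimed right-hand side.

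The heart of the argument is the viscous term. Integrating $-\nu\int|\omega|^{q-2}\omega\cdot\Delta\omega\,\dd x$ by parts and expanding $\na(|\omega|^{q-2}\omega)$ by the chain rule---using $\sum_i\omega_i\p_k\omega_i = |\omega|\,\p_k|\omega|$ to collapse the cross terms---gives
\begin{equation}
\nu\int\Big[|\omega|^{q-2}|\na\omega|^2 + (q-2)\,|\omega|^{q-2}\big|\na|\omega|\big|^2\Big]\,\dd x.
\end{equation}
The crucial pointwise bound is Kato's inequality $\big|\na|\omega|\big|^2 \leq |\na\omega|^2$, which follows from Cauchy--Schwarz applied to $\omega_i\p_k\omega_i$ and holds for every $q$; adding $(q-2)\big|\na|\omega|\big|^2$ to both sides shows the bracketed integrand is bounded below by $(q-1)\,|\omega|^{q-2}\big|\na|\omega|\big|^2$. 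Together with the elementary identity $\big|\na(|\omega|^{q/2})\big|^2 = \tfrac{q^2}{4}|\omega|^{q-2}\big|\na|\omega|\big|^2$, the viscous term is bounded below by $\tfrac{4(q-1)}{q^2}\nu\int\big|\na(|\omega|^{q/2})\big|^2\,\dd x$. Multiplying the whole identity by $q$ then produces the dissipation coefficient $\tfrac{4(q-1)}{q}\nu$ stated in the lemma.

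I expect the principal obstacle to be the rigorous justification for a Leray--Hopf weak solution, since $\omega$ need not be smooth and the test field $|\omega|^{q-2}\omega$ is singular wherever $\omega$ vanishes (in particular for $q<2$). I would resolve this by regularising---replacing $|\omega|^{q-2}$ with $(|\omega|^2+\e^2)^{(q-2)/2}$ and performing the computation on a smooth approximating sequence such as Galerkin approximations or the mollified vorticity equation---and then passing to the limit $\e\to0$. Because the $\na(|\omega|^{q/2})$ contribution enters with a favourable sign, it survives the limit through weak lower semicontinuity (equivalently Fatou's lemma), so the identity for the approximations becomes the asserted inequality in the limit.
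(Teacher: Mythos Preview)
Your derivation is correct and is precisely the standard energy estimate for the vorticity equation: test against $|\omega|^{q-2}\omega$, use incompressibility to kill the transport term, integrate the Laplacian by parts, and apply Kato's inequality $\big|\nabla|\omega|\big|\le|\nabla\omega|$ together with the identity $\big|\nabla(|\omega|^{q/2})\big|^2=\tfrac{q^2}{4}|\omega|^{q-2}\big|\nabla|\omega|\big|^2$ to extract the dissipative term with coefficient $\tfrac{4(q-1)}{q}$. The regularisation you outline (replacing $|\omega|^{q-2}$ by $(|\omega|^2+\e^2)^{(q-2)/2}$ and working with smooth approximants) is indeed the right way to make the computation rigorous for weak solutions and for $1\le q<2$.

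As for comparison with the paper: there is nothing to compare. The paper does not supply its own proof of this lemma; it simply quotes the inequality from Qian \cite{qian} and refers the reader to the proof of Lemma~2 there. Your argument is exactly the computation carried out in that reference, so you have filled in what the present paper chose to cite rather than reproduce.
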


Finally, in Sect.\,3 we shall make crucial use of the Hardy--Littlewood--Sobolev interpolation inequality ({\it cf.} p106, Lieb--Loss \cite{ll}), with $n=3$, $\lambda=2+\delta$ and $f,h$ supported in $\Omega$:
\begin{lemma}\label{lemma: HLS}
Let $1<p,r<\infty$, $0<\lambda<n$ satisfy $1/p + \lambda/n + 1/r =2$. Let $f\in L^p(\R^n)$ and $h\in L^r(\R^n)$. Then there exists $C=C(n,\lambda,p)$ such that
\begin{equation}
\bigg| \int_{\R^n} \int_{\R^n} \frac{f(x)h(y)}{|x-y|^\lambda}\,\dd x\,\dd y \bigg| \leq C \|f\|_{L^p(\R^n)} \|h\|_{L^r(\R^n)}.
\end{equation}
\end{lemma}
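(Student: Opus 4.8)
The plan is to prove the inequality in its dual, pointwise form, thereby reducing it to the $L^p$-boundedness of the Hardy--Littlewood maximal operator (Hedberg's method). First I would reduce to nonnegative functions: since $\big|\int\!\int \frac{f(x)h(y)}{|x-y|^\lambda}\,\dd x\,\dd y\big| \leq \int\!\int \frac{|f(x)|\,|h(y)|}{|x-y|^\lambda}\,\dd x\,\dd y$, it suffices to treat $f,h\geq 0$. Introducing the Riesz potential $I_\lambda f(x):=\int_{\R^n}\frac{f(y)}{|x-y|^\lambda}\,\dd y$ and writing $r':=r/(r-1)$, Fubini and Hölder's inequality give $\int\!\int \frac{f(x)h(y)}{|x-y|^\lambda}\,\dd x\,\dd y = \int_{\R^n} h(y)\,(I_\lambda f)(y)\,\dd y \leq \|h\|_{L^r}\,\|I_\lambda f\|_{L^{r'}}$. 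Thus the whole lemma follows once I establish the mapping estimate $\|I_\lambda f\|_{L^{r'}} \leq C\|f\|_{L^p}$. Note that the scaling hypothesis $1/p+\lambda/n+1/r=2$ is exactly equivalent to $1/r' = 1/p + \lambda/n - 1$, the dimensionally correct relation between the exponents of $I_\lambda$.

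The core step is a pointwise interpolation inequality for $I_\lambda f$. Fixing $x$ and a radius $R>0$, I split $I_\lambda f(x)$ into the contributions of $\{|x-y|\leq R\}$ and $\{|x-y|>R\}$. On the near region, decomposing into dyadic annuli $2^{-k-1}R<|x-y|\leq 2^{-k}R$ and summing over $k\geq 0$, each annulus is controlled by $(2^{-k-1}R)^{-\lambda}\int_{|x-y|\leq 2^{-k}R} f(y)\,\dd y \leq C(2^{-k}R)^{n-\lambda}\,Mf(x)$, where $Mf(x):=\sup_{s>0}|B_s(x)|^{-1}\int_{B_s(x)}f$ is the maximal function; since $\lambda<n$ the geometric series in $k$ converges, yielding $\int_{|x-y|\leq R}\frac{f(y)}{|x-y|^\lambda}\,\dd y \leq C R^{n-\lambda}Mf(x)$. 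On the far region, Hölder's inequality gives $\int_{|x-y|>R}\frac{f(y)}{|x-y|^\lambda}\,\dd y \leq \|f\|_{L^p}\big(\int_{|z|>R}|z|^{-\lambda p'}\,\dd z\big)^{1/p'} = C R^{n/p'-\lambda}\|f\|_{L^p}$, where the radial integral converges precisely because $\lambda p'>n$; this is in turn equivalent to $1/r'>0$, that is, to the assumption $r>1$. Optimizing the sum $C\big(R^{n-\lambda}Mf(x)+R^{n/p'-\lambda}\|f\|_{L^p}\big)$ over $R$ (the two terms balance at $R=(\|f\|_{L^p}/Mf(x))^{p/n}$) produces the pointwise bound $I_\lambda f(x) \leq C\|f\|_{L^p}^{1-p/r'}\big(Mf(x)\big)^{p/r'}$, in which the exponents are forced by the scaling relation and satisfy $0<p/r'\leq 1$ because $\lambda\leq n$ while $p,r'\in(1,\infty)$.

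Finally I would take the $L^{r'}$ norm of this pointwise inequality. Since $(p/r')\cdot r' = p$, one has $\big\|(Mf)^{p/r'}\big\|_{L^{r'}} = \|Mf\|_{L^p}^{p/r'}$, whence $\|I_\lambda f\|_{L^{r'}} \leq C\|f\|_{L^p}^{1-p/r'}\|Mf\|_{L^p}^{p/r'}$; the Hardy--Littlewood maximal inequality $\|Mf\|_{L^p}\leq C\|f\|_{L^p}$, valid for $1<p<\infty$, then closes the estimate to give $\|I_\lambda f\|_{L^{r'}}\leq C\|f\|_{L^p}$, and the lemma follows. I expect the main obstacle to be bookkeeping rather than conceptual depth: one must verify that each of the three exponent constraints plays its designated role --- $\lambda<n$ for convergence of the near (maximal-function) sum, $r>1$ (equivalently $\lambda p'>n$) for convergence of the far (Hölder) integral, and $p>1$ both for the Hölder step and, crucially, for the $L^p$-boundedness of $M$ --- and that the single scaling identity $1/p+\lambda/n+1/r=2$ makes the optimization exponents $1-p/r'$ and $p/r'$ come out consistently. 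The one genuinely nonelementary input is the maximal inequality itself, which I would simply cite.
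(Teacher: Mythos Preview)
Your argument is correct. The pointwise Hedberg inequality $I_\lambda f(x)\le C\,\|f\|_{L^p}^{1-p/r'}(Mf(x))^{p/r'}$, obtained by the near/far splitting and dyadic summation, is exactly right, and the exponent bookkeeping you carry out checks: $\lambda<n$ makes the near sum converge, $r>1$ (equivalently $\lambda p'>n$) makes the far integral converge, and $p>1$ gives the maximal inequality. One microscopic quibble: your justification ``$0<p/r'\le 1$ because $\lambda\le n$ while $p,r'\in(1,\infty)$'' is slightly off in attribution --- the lower bound $p/r'>0$ is precisely the condition $\lambda>n/p'$, which you already derived from $r>1$, not from $r'\in(1,\infty)$ alone --- but the conclusion is correct and the argument is unaffected.

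As for comparison with the paper: there is nothing to compare. The paper does not prove this lemma; it is stated as a standard result and attributed to Lieb--Loss (p.~106). Your write-up therefore goes well beyond what the paper does, supplying a complete self-contained proof (modulo the Hardy--Littlewood maximal theorem) where the paper simply cites the literature. If anything, your approach is the one Lieb--Loss themselves do \emph{not} take --- their proof proceeds via layer-cake decomposition and rearrangement rather than via the maximal function --- but Hedberg's method is a perfectly standard and arguably more direct alternative.
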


\section{Proof of Theorem \ref{thm: main}}

Equipped with Lemmas \ref{lemma: geometric}--\ref{lemma: HLS} above, we are ready to prove Theorem \ref{thm: main}. 

As in Constantin--Fefferman \cite{cf}, let us decompose the  vorticity into ``big'' and ``small'' parts, with respect to the (large) constant $\Lambda > 0$ in Theorem \ref{thm: main}. To this end, taking $\chi \in C^\infty([0,\infty[)$, $0 \leq \chi\leq 1$, $\chi \equiv 1$ on $[0,1]$ and $\chi \equiv 0$ on $[2,\infty[$, we define
\begin{align}
\omega(t,x) &:= \osmall(t,x) + \obig(t,x)\nonumber\\
&=\chi\Big(\frac{|\omega(t,x)|}{\Lambda}\Big) \omega(t,x) + \bigg\{1-\chi\Big(\frac{|\omega(t,x)|}{\Lambda}\Big)\bigg\} \,\omega(t,x).
\end{align}
We also write 
\begin{equation}
\scal^{(i)} (t,x) = \frac{3}{8\pi} {\rm p.v.}\int \bigg\{\frac{\xyhat \otimes \big(\xyhat \times \omega^{(i)}(t,x)\big) + \big(\xyhat \times \omega^{(i)}(t,x)\big)\otimes\xyhat}{|x-y|^3}\bigg\}\,\dd y
\end{equation}
for $i\in\{<,>\}$, namely the corresponding singular integral with input $\omega^{(i)}$.

Now, in view of Lemma \ref{lemma: qian}, our goal is to estimate
\begin{align}
q \int |\omega(t,x)|^{q-2} \scal(t,x):\big(\omega(t,x) \otimes \omega(t,x)\big)\,\dd x =: q \int K(t,x)\,\dd x.
\end{align}
Following the notations in Constantin--Fefferman \cite{cf}, there holds
\begin{equation}\label{xx}
|K(t,x)| \leq |X(t,x)|+|Y(t,x)|+|Z(t,x)|,
\end{equation}
where
\begin{eqnarray*}
&&X(t,x):=  \sum_{(i,j)\neq (>,>)}|\omega(t,x)|^{q-2} \Big\{\scal(t,x): \big(\omega^{(i)}(t,x)\otimes\omega^{(j)}(t,x)\big)\Big\},\\
&&Y(t,x):= |\omega(t,x)|^{q-2} \Big\{\ssmall(t,x): \big(\obig(t,x)\otimes\obig(t,x)\big)\Big\},\\
&&Z(t,x):= |\omega(t,x)|^{q-2} \Big\{\sbig(t,x): \big(\obig(t,x)\otimes\obig(t,x)\big)\Big\}.
\end{eqnarray*}
We shall estimate these three terms in order in Subsections 3.1--3.3 below.

\subsection{The $X(t,x)$ Term}

To estimate $X(t,x)$, recall that $\omega^{(i)} \mapsto \scal^{(i)}$ is a Calderon--Zygmund singular integral; hence,  for some $C=C(r,\Omega)$ we have
\begin{equation}\label{cz}
\|\scal^{(i)}\|_{L^r} \leq C\|\omega^{(i)}\|_{L^r}\qquad \text{ for each } r \in ]1,\infty[ \text{ and } i \in \{<,>\}.
\end{equation}
As $|\omega^{(<)}| \leq \Lambda$, for $q>1$ we can bound by H\"{o}lder's inequality and Eq.\,\eqref{cz}:
\begin{align}\label{X}
\Big|\int X(t,x)\,\dd x\Big| &\leq \Lambda \int |\omega(t,x)|^{q-1} |\scal|\,\dd x\nonumber\\
&\leq \Lambda\Big\||\omega(t,\cdot)|^{q-1}\Big\|_{L^{\frac{q}{q-1}}} \|\scal\|_{L^q}\nonumber\\
&\leq C\Lambda\Big\||\omega(t,\cdot)|^{q-1}\Big\|_{L^{\frac{q}{q-1}}} \|\omega\|_{L^q} \leq C\Lambda \|\omega(t,\cdot)\|_{L^q}^q,
\end{align}
where $C=C(q,\Omega)$. 

\subsection{The $Y(t,x)$ Term}
For this purpose, let us denote by $\|\omega\|_{L^\infty(0,T;L^1(\Omega)} \leq \G$. Indeed, $\G$ is finite for any weak solution to Eqs.\,\eqref{NS eqn}--\eqref{initial data}
; see \cite{qian, cf}. Then we estimate
\begin{align}\label{Y}
\Big|\int Y(t,x)\,\dd x\Big| &\leq \int \big|\ssmall(t,x)\big| |\omega(t,x)|^q\,\dd x\nonumber\\
&\leq \bigg( \int \Big(|\omega(t,x)|^{q/2}\Big)^4\,\dd x\bigg)^{\frac{1}{2}} \bigg(\int \big|\ssmall(t,x)\big|^2\,\dd x \bigg)^{\frac{1}{2}}\nonumber\\
&\leq C \bigg(\int \Big|\na \big(|\omega(t,x)|^{q/2}\big)\Big|^2\,\dd x \bigg)^{\frac{3}{4}} \bigg(\int |\omega(t,x)|^q\,\dd x\bigg)^{\frac{1}{4}} \bigg(\int\big|\osmall(t,x)\big|^2\,\dd x\bigg)^{\frac{1}{2}}\nonumber\\
&\leq C\sqrt{\Lambda\G}\bigg(\int \Big|\na \big(|\omega(t,x)|^{q/2}\big)\Big|^2\,\dd x \bigg)^{\frac{3}{4}} \bigg(\int |\omega(t,x)|^q\,\dd x\bigg)^{\frac{1}{4}} \nonumber\\
&\leq \frac{2(q-1)}{q}\nu \bigg(\int \Big|\na \big(|\omega(t,x)|^{q/2}\big)\Big|^2\,\dd x \bigg) + C \nu^{-3}(\Lambda\G)^2 \bigg(\int |\omega(t,x)|^q\,\dd x\bigg).
\end{align}
In the second line we use the  Cauchy--Schwarz inequality; in the third line the Gagliardo--Nirenberg--Sobolev interpolation inequality (indeed, the special case known as the Ladyszhenskaya inequality) and Eq.\,\eqref{cz}; in the fourth line $|\osmall|\leq\Lambda$ and $\|\omega\|_{L^\infty(0,T;L^1(\Omega))} \leq \G$, and in the final line the Young's inequality $ab \leq a^4/4 + 3b^{4/3}/4$ for suitable $a,b\geq 0$.

\subsection{The $Z(t,x)$ Term}
$Z$ is the difficult term. To control it, we observe that $\wbig$ is the direction of vorticity on the region with large vorticity magnitude. Thanks to Eq.\,\eqref{vortex stretching term}, Lemma \ref{lemma: geometric} and the assumptions in Theorem \ref{thm: main}, we have
\begin{equation}
\Big|\int Z(t,x)\,\dd x\Big| \leq q J(t),
\end{equation}
where
\begin{eqnarray}
&&J(t) := \rho^{-1}\int |\omega(t,x)|^q I(t,x)\,\dd x,\label{J}\\
&&I(t,x):=\int \frac{|\omega(t,y)|}{|x-y|^\lambda}\,\dd y, \label{I}\\
&&\lambda = 3-\beta =: 2+\delta.
\end{eqnarray}

The bound for $J(t)$ is achieved by the lemma below. The parameters $\theta$, $\alpha$ involved therein will be carefully chosen later.
\begin{lemma}\label{lemma: J}
Suppose $\|\omega(t,\cdot)\|_{L^1(\Omega)}\leq\G$ for all $t\in [0,T]$; $q>1$. Then
\begin{align}
J(t) &\leq \frac{q-1}{q^2} \nu \bigg(\int \Big| \na \big(|\omega(t,x)|^{q/2}\big)\Big|^2\,\dd x\bigg)\nonumber\\
 &\qquad+ C \G^{\frac{\theta}{1-\alpha}} q^{-1}\nu^{-\frac{\alpha}{1-\alpha}} \rho^{-\frac{1}{\alpha}} \bigg(\int |\omega(t,x)|^q\,\dd x\bigg)^{1+\frac{1-\theta}{q(1-\alpha)}}.
\end{align}
The parameters $\theta$, $\alpha \in [0,1]$ depend on $q, \lambda$, and the constant $C$ depends on $\lambda, \Omega, q$ and $\alpha$.
\end{lemma}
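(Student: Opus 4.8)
The plan is to estimate $J(t) = \rho^{-1}\int |\omega|^q I(t,x)\,\dd x$ by first controlling the inner singular integral $I(t,x)$ via the Hardy--Littlewood--Sobolev inequality (Lemma \ref{lemma: HLS}), then absorbing part of the resulting gradient term into the dissipation on the left-hand side of \eqref{qian} using a Gagliardo--Nirenberg interpolation, and finally closing the estimate with Young's inequality. The exponents $\theta$ and $\alpha$ will be determined by forcing the scaling to balance in the last step.

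First I would write $J(t) \le \rho^{-1}\big\| |\omega|^q \big\|_{L^{p'}} \cdot \big\| I(t,\cdot) \big\|_{L^{p}}$ — or more precisely, recognizing the double-integral structure $\int\!\int |\omega(t,x)|^q |\omega(t,y)| \,|x-y|^{-\lambda}\,\dd x\,\dd y$, apply Lemma \ref{lemma: HLS} with $f = |\omega|^q$, $h = |\omega|$, $n=3$, $\lambda = 2+\delta = 3-\beta$. The HLS balance condition $1/p + \lambda/3 + 1/r = 2$ with $h \in L^r$ and $r$ interpolated between $L^1$ (using $\G$) and $L^q$ gives a bound of the form
\begin{equation*}
J(t) \le C\rho^{-1} \big\| |\omega(t,\cdot)|^q \big\|_{L^p} \,\G^{\sigma}\, \|\omega(t,\cdot)\|_{L^q}^{\tau},
\end{equation*}
where $p$, $\sigma$, $\tau$ are dictated by $\lambda$ and $q$; here is where the constraint $\beta > \max\{0, 5/q - 2\}$ enters, ensuring the HLS hypotheses $1 < p, r < \infty$ and $0 < \lambda < 3$ are met. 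Next I would convert $\big\| |\omega|^q \big\|_{L^p} = \big\| |\omega|^{q/2} \big\|_{L^{2p}}^2$ and apply the Gagliardo--Nirenberg inequality to $|\omega|^{q/2}$ in $\R^3$ (or $\mathbb{T}^3$, using $\|\omega\|_{L^1} \le \G$ to handle the mean on the torus), interpolating $L^{2p}$ between $\dot H^1$ and $L^2$; note $\||\omega|^{q/2}\|_{L^2}^2 = \|\omega\|_{L^q}^q$. This produces a factor $\big(\int |\na(|\omega|^{q/2})|^2\big)^{a}$ times a power of $\|\omega\|_{L^q}^q$, with $a \in [0,1)$.

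Collecting powers, $J(t) \le C\rho^{-1}\G^{\sigma}\,\big(\int |\na(|\omega|^{q/2})|^2\big)^{a}\,\big(\int |\omega|^q\big)^{b}$ for suitable $a, b$. I would then invoke Young's inequality $XY \le \varepsilon X^{1/a} + C_\varepsilon Y^{1/(1-a)}$ with $X = \big(\int |\na(|\omega|^{q/2})|^2\big)^{a}$ and $Y$ the remaining factor, choosing $\varepsilon$ so that the gradient term is exactly $\tfrac{q-1}{q^2}\nu \int |\na(|\omega|^{q/2})|^2$ as claimed; tracking the $\nu$-dependence through $C_\varepsilon \sim \varepsilon^{-a/(1-a)}$ yields the stated $\nu^{-\alpha/(1-\alpha)}$, and matching the bookkeeping forces the identifications $\alpha = a$ and $\theta$ such that $\sigma = \theta/(1-\alpha)$ and the final exponent on $\int|\omega|^q$ equals $1 + \frac{1-\theta}{q(1-\alpha)}$. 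The main obstacle I anticipate is purely arithmetic: verifying that the exponent constraints from HLS and from Gagliardo--Nirenberg are simultaneously satisfiable precisely when $q > 5/3$ and $\beta > 5/q - 2$, and that the exponent $b$ on $\big(\int|\omega|^q\big)$ after Young's inequality is strictly larger than $1$ (so that the differential inequality is genuinely superlinear, which is what makes the eventual integration in time against the hypothesis $\omega \in L^q(\Omega \times [0,T])$ both necessary and sufficient). Getting the algebra of exponents to close cleanly — rather than any single inequality — is the delicate point.
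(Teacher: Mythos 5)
Your proposal is correct and follows essentially the same route as the paper: H\"{o}lder (equivalently, HLS applied to the bilinear form with $f=|\omega|^q$, $h=|\omega|$), Gagliardo--Nirenberg on $\||\omega|^{q/2}\|_{L^{2p}}^2$, Lebesgue interpolation of $\|\omega\|_{L^r}$ between $L^1$ and $L^q$, and Young's inequality to peel off the dissipation term, with $\alpha$, $\theta$ identified exactly as you describe. One small correction: the constraint $\beta>\max\{0,5/q-2\}$ is not needed for the HLS hypotheses in this lemma (which hold for any $\lambda\in\,]2,3[$ and admissible $p$); it only enters later, when matching the exponents to satisfy the condition $(\clubsuit)$ that makes the resulting differential inequality closable.
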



\begin{proof}
The proof is divided into five steps.

\smallskip
{\bf 1.} By H\"{o}lder's inequality, there holds
\begin{equation}\label{estimate 1}
J(t) \leq \bigg(\int |\omega(t,x)|^{pq}\bigg)^{\frac{1}{p}} \bigg(\int |I(t,x)|^{p'}\bigg)^{\frac{1}{p'}} = \Big\| |\omega(t,\cdot)|^{\frac{q}{2}}\Big\|^2_{L^{2p}} \,\|I(t,\cdot)\|_{L^{p'}}.
\end{equation}
We write $p' = \frac{p}{p-1}$ for the conjugate exponent of $p$. For the moment we require no condition on the index $p$ more than $p\in]1,\infty[$.

\smallskip
{\bf 2.} The Gagliardo--Nirenberg--Sobolev interpolation inequality ({\it cf.} Nirenberg \cite{n}) yields
\begin{equation}\label{estimate 2}
 \Big\| |\omega(t,\cdot)|^{\frac{q}{2}}\Big\|^2_{L^{2p}} \leq C_1 \bigg(\int \Big| \na \big(|\omega(t,x)|^{q/2}\big)\Big|^2\,\dd x\bigg)^\alpha \bigg(\int |\omega(x)|^q\,\dd x\bigg)^{1-\alpha},
\end{equation}
where $\alpha\in ]0,1[$ is chosen such that
\begin{equation}\label{p, alpha}
p=\frac{3}{3-2\alpha}.
\end{equation}
We notice that $p\in ]1,3[$; $C_1$ depends only on $q, \Omega$.

\smallskip
{\bf 3.} For the $I$ term in Eq.\,\eqref{estimate 1}, we apply the Hardy--Littlewood--Sobolev interpolation inequality (Lemma \ref{lemma: HLS} plus an elementary duality argument) to find
\begin{equation}\label{estimate 3}
\|I(t,\cdot)\|_{L^{p'}} \leq C_2 \|\omega(t,\cdot)\|_{L^\sigma},
\end{equation}
where $C_2=C(p,\lambda,\Omega)$. The indices satisfy
\begin{equation}\label{sigma, p, lambda}
\frac{1}{\sigma} + \frac{1}{p} + \frac{\lambda}{3} =2,
\end{equation}
with $1 < \sigma, p <\infty$ and $0 < \lambda < 3$. For our purpose we shall specialise to $\lambda \in [2,3[$, hence $\delta = \lambda-2 \in [0,1[$, as well as $\sigma \in [1,q]$.

\smallskip
{\bf 4.} Now let us put together Eqs.\,\eqref{estimate 1}\eqref{estimate 2}\eqref{estimate 3} and apply Young's inequality $ab \leq \alpha a^{\frac{1}{\alpha}} + (1-\alpha) b^{\frac{1}{1-\alpha}}$ for suitable $a,b\geq 0$ (recall that $\alpha \in ]0,1[$). This gives us
\begin{align}\label{estimate 4}
J(t) &\leq \frac{q-1}{4q^2} \nu \bigg(\int \Big| \na \big(|\omega(t,x)|^{q/2}\big)\Big|^2\,\dd x\bigg) \nonumber\\
&\qquad + C_3 q^{-1}\nu^{-\frac{\alpha}{1-\alpha}} \rho^{-\frac{1}{\alpha}} \bigg(\int|\omega(t,x)|^q\,\dd x\bigg) \bigg(\int |\omega(t,x)|^\sigma\,\dd x\bigg)^{\frac{1}{\sigma (1-\alpha)}},
\end{align}
where the constant $C_3$ above depends on $q, p, \lambda, \Omega$ (note that $\alpha$ is determined by $p$).

\smallskip
{\bf 5.} Finally, for $1\leq \sigma \leq q$ we have the interpolation inequality for Lebesgue spaces:
\begin{equation}
\bigg(\int |\omega(t,x)|^\sigma\,\dd x\bigg)^{\frac{1}{\sigma (1-\alpha)}} \leq \bigg( \int |\omega(t,x)|\,\dd x\bigg)^{\frac{\theta}{1-\alpha}} \bigg(\int|\omega(t,x)|^q\,\dd x\bigg)^{\frac{1-\theta}{q(1-\alpha)}},
\end{equation}
with $\theta \in [0,1]$ determined by
\begin{equation}\label{sigma, theta q}
\frac{1}{\sigma} = \frac{\theta}{1} + \frac{1-\theta}{q}.
\end{equation}
In view of Steps 1--5, the proof is complete.   \end{proof}

\subsection{A Condition on the Indices $\theta, \alpha$}

Now, let us single out the following condition
\begin{equation*}
1-\theta \leq q (1-\alpha) \tag{$\clubsuit$}
\end{equation*}
on the parameters $\theta$, $\alpha$ in Lemma \ref{lemma: J}. We then have the following
\begin{lemma}\label{lemma: completion of proof}
Assume $(\clubsuit)$ for the parameters $\theta$, $\alpha$ in Lemma \ref{lemma: J}. Then, under the assumptions of Theorem \ref{thm: main}, we have $|\omega|^{q/2} \in L^\infty(0,T;L^2(\Omega)) \cap L^2(0,T;H^1(\Omega))$.
\end{lemma}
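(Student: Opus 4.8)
The plan is to insert the bounds of Subsections 3.1--3.3 into the differential inequality of Lemma \ref{lemma: qian} and then close a Gronwall-type argument in which $(\clubsuit)$ is exactly the hypothesis that neutralises the superlinear term.

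First I would assemble the differential inequality. Writing $y(t):=\int|\omega(t,x)|^{q}\,\dd x$ and $D(t):=\int\big|\na\big(|\omega(t,x)|^{q/2}\big)\big|^{2}\,\dd x$, Lemma \ref{lemma: qian} gives $\frac{\dd}{\dd t}y+\frac{4(q-1)}{q}\nu D\le q\int(|X|+|Y|+|Z|)\,\dd x$. Substituting \eqref{X}, \eqref{Y}, the bound $\big|\int Z\,\dd x\big|\le qJ(t)$ from Subsection 3.3, and Lemma \ref{lemma: J} --- and using $\|\omega\|_{L^{\infty}(0,T;L^{1}(\Omega))}\le\G<\infty$ for Leray--Hopf solutions --- the contributions proportional to $\nu D(t)$ produced by the $Y$- and $Z$-estimates are absorbed into the dissipation on the left; to make this bookkeeping go through I would re-run the Young-inequality steps in \eqref{Y} and in the proof of Lemma \ref{lemma: J} with a free small parameter, so that those contributions carry total coefficient strictly below $\frac{4(q-1)}{q}\nu$, at the price of worse constants elsewhere. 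What survives is
\[
\frac{\dd}{\dd t}y(t)+c\,\nu\,D(t)\ \le\ A\,y(t)+B\,y(t)^{1+\kappa},
\qquad \kappa:=\frac{1-\theta}{q(1-\alpha)},
\]
with $c>0$ and $A,B\ge0$ depending only on $q,\nu,\Lambda,\G,\rho,\theta,\alpha$ and $\Omega$. Condition $(\clubsuit)$ is precisely $\kappa\le1$, i.e.\ the growth exponent $1+\kappa$ is at most $2$.

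Next I would close the estimate. Discarding the nonnegative term $c\nu D(t)$ and dividing by $1+y$, one has $\frac{\dd}{\dd t}\log(1+y)\le A+B\,y^{\kappa}$; integrating over $(s,t)\subset(0,T)$ and invoking $(\clubsuit)$ through $\kappa\le1$, H\"older's inequality in the time variable gives
\[
\int_{s}^{t}y(\tau)^{\kappa}\,\dd\tau\ \le\ (t-s)^{1-\kappa}\Big(\int_{s}^{t}y(\tau)\,\dd\tau\Big)^{\kappa}\ \le\ T^{1-\kappa}\,\|\omega\|_{L^{q}(\Omega\times[0,T])}^{q\kappa},
\]
which is finite by the standing hypothesis $\omega\in L^{q}(\Omega\times[0,T])$. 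Therefore $1+y(t)\le(1+y(s))\,e^{C}$ for all $0<s<t<T$, with $C$ independent of $s$ and $t$. Since $\int_{0}^{T}y\,\dd\tau<\infty$, for every $\e>0$ the set of $s\in(0,\e)$ with $y(s)\le 2\e^{-1}\|\omega\|_{L^{q}(\Omega\times[0,T])}^{q}$ has positive measure; taking such an $s$ and letting $t$ range over $(\e,T)$ yields a bound on $(\e,T)$, hence (modulo the $t\to0^{+}$ point discussed below) $|\omega|^{q/2}\in L^{\infty}(0,T;L^{2}(\Omega))$. Reinserting this bound into the displayed differential inequality and integrating in $t$ then gives $c\nu\int_{0}^{T}D(\tau)\,\dd\tau<\infty$, i.e.\ $|\omega|^{q/2}\in L^{2}(0,T;H^{1}(\Omega))$.

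The step I expect to be the main obstacle is the closing one: the production term $B\,y^{1+\kappa}$ is genuinely superlinear while the hypothesis only gives $y\in L^{1}(0,T)$, and $(\clubsuit)$ is the sharp dividing line. It is exactly what makes $y^{\kappa}$ integrable in time --- equivalently, it forces any excursion of $y$ up to a height $H$ to cost at least a constant multiple of $H^{1-\kappa}$ (or of $\log H$ when $\kappa=1$) in $\int_{0}^{T}y$, which bounds $H$; for $\kappa>1$ the mechanism breaks down. Two subsidiary matters still require care: (i) the bookkeeping in the first step, which must leave a strictly positive coefficient multiplying $\nu D(t)$; and (ii) since $y$ need not be classically differentiable for a weak solution, the whole of the second step should be run on the integrated form of the inequality, valid for a.e.\ pair $s<t$, with the behaviour as $t\to0^{+}$ resting only on $\omega\in L^{q}(\Omega\times[0,T])$ forcing $y(s)<\infty$ for a.e.\ $s$.
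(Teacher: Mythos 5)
Your proposal follows essentially the same route as the paper: the estimates of Subsections 3.1--3.3 are inserted into Lemma \ref{lemma: qian} to yield the differential inequality \eqref{Q}, condition $(\clubsuit)$ makes the exponent $\gamma=\kappa\le 1$ so that $\int_0^T\mathcal{Q}^{\gamma}\,\dd t$ is finite by H\"older in time together with $\omega\in L^q(\Omega\times[0,T])$, and a Gronwall argument closes the bound, with the $L^2(0,T;H^1)$ half recovered by reinserting the $L^\infty$ bound. Your $\log(1+y)$ device and the selection of a good starting time $s$ are minor technical refinements of the paper's direct integration from $\mathcal{Q}_0$, not a different approach.
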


\begin{proof}
	Collecting the estimates in Sects.\,3.1--3.3 (in particular,  Eqs.\,\eqref{xx}\eqref{X}\eqref{Y} and Lemma \ref{lemma: J}), we can reduce the estimate in Lemma \ref{lemma: qian} to the following:
	\begin{align}\label{Q}
	\frac{\dd \mathcal{Q}}{\dd t}  + \frac{q-1}{q}\nu \bigg(\int \big|\na (|\omega(t,x)|^{q/2}\big|^2\,\dd x\bigg) \leq C\bigg\{\Lambda + \nu^{-3}\Lambda^2 \G^2 + \G^{\frac{\theta}{1-\alpha}}\nu^{-\frac{\alpha}{1-\alpha}}\rho^{-\frac{1}{\alpha}}\mathcal{Q}^\gamma\bigg\}\mathcal{Q},
	\end{align}
where $C$ depends only on $q$ and $\Omega$, $\gamma:=\frac{1-\theta}{q(1-\alpha)}<1$ by $(\clubsuit)$, and
\begin{equation}
\mathcal{Q}(t):=\int|\omega(t,x)|^q\,\dd x.
\end{equation}
Now, let us invoke the assumption $\omega\in L^q(\Omega\times [0,T])$ in Theorem \ref{thm: main} to get
\begin{equation}
\int_0^T \mathcal{Q}^\gamma \,\dd t \leq T^{1-\gamma} \|\omega\|_{L^q(\Omega\times [0,T])}^{q\gamma} < \infty.
\end{equation}
Therefore, by the ordinary differential inequality (neglecting the $\int |\na (|\omega(t,x)|^{q/2})|^2\,\dd x$ term that has the favourable sign), 
\begin{equation}
\mathcal{Q}(t) \leq \mathcal{Q}_0 \,\exp\bigg\{C(\Lambda+\nu^{-3}\Lambda^2\G^2)T + CT^{1-\gamma}\|\omega\|_{L^q(\Omega\times [0,T])}^{q\gamma} \G^{\frac{\theta}{1-\alpha}}\nu^{-\frac{\alpha}{1-\alpha}}\rho^{-\frac{1}{\alpha}}\bigg\}.
\end{equation}
This proves $|\omega|^{q/2} \in L^\infty(0,T;L^2(\Omega))$. The other half of the conclusion follows immediately from Eq.\,\eqref{Q}; so the proof is complete.  \end{proof}

\subsection{Completion of the Proof of Theorem \ref{thm: main}} Finally, we observe that Theorem \ref{thm: main}  follows directly from the conclusion of Lemma \ref{lemma: completion of proof}. Therefore, to complete the proof, it remains to specify the indices $\theta$, $\alpha$ in Lemma \ref{lemma: J} that satisfy $(\clubsuit)$. Here enters the restriction of the range of $q$ to $]5/3,\infty[$, as well as the choice of $\theta$ and $\alpha$, which are dependent on $q$. In effect, the choice of $\theta, \alpha$ amounts to the specification of the parameter $p$ in the proof of Lemma \ref{lemma: J}. To this end, we shall keep track in detail of the range of parameters for all the inequalities involved in the proof of Lemma \ref{lemma: J}:
\begin{proof}[Proof of Theorem \ref{thm: main}]
Recall that $p \in ]1,3[$ in the proof of Lemma \ref{lemma: J}; by Eq.\,\eqref{sigma, p, lambda} and $\lambda=2+\delta$,
\begin{equation}
p = \frac{3\sigma}{(4-\delta)\sigma - 3}.
\end{equation}
Here $\sigma \in [1,q]$ due to  Step 5 in the proof of Lemma \ref{lemma: J}, hence $p\in [\frac{3q}{(4-\delta)q-3}, \frac{3}{1-\delta}]$. Moreover, the right endpoint $\frac{3}{1-\delta}$ is no less than $3$. We thus have
\begin{equation}\label{range for p}
p \,\in \big]1,3\big[\, \bigcap \, \Big[\frac{3q}{(4-\delta)q-3}, 3\Big[.
\end{equation}
This condition is non-vacuous, since $q>5/3$ implies $\frac{3q}{(4-\delta)q-3}<3$ for $\delta \in [0,1[$. In the sequel we shall use it to derive more stringent conditions on $\delta$.

Now let us express all the other constants --- $\sigma$, $\theta$ and $\alpha$ --- in terms of $p$, and then match the condition ($\clubsuit$). Indeed, from Eq.\,\eqref{sigma, p, lambda} we get
\begin{equation}\label{sigma}
\sigma = \frac{3p}{(4-\delta)p -3}.
\end{equation}
Together with Eq.\,\eqref{sigma, theta q}, it leads to
\begin{equation}\label{theta}
\theta = \frac{(4-\delta)pq - 3(p+q)}{3p (q-1)}.
\end{equation}
Also, Eq.\,\eqref{p, alpha} can be written as
\begin{equation}\label{alpha}
\alpha = \frac{3}{2}\Big(1-\frac{1}{p}\Big).
\end{equation}
Thus, substituting in Eqs.\,\eqref{sigma}\eqref{theta} and \eqref{alpha}, an elementary computation shows that ($\clubsuit$) is equivalent to $3q(3-p) + (5-2\delta)p - 15 \geq 0$. This gives us an upper bound for $\delta$:
\begin{equation}\label{upper bound for delta}
\delta \leq \frac{(3q-5)(3-p)}{2p} =: \mathcal{U}(p,q).
\end{equation}
Notice that for $q > 5/3$ (by assumption) and $p < 3$ (by Eq.\,\eqref{range for p}), our condition \eqref{upper bound for delta} allows for non-trivial $\delta \in [0,1[$; in addition, $p\mapsto\mathcal{U}(p,q)$ is decreasing on $]1,3[$.

To conclude the proof, let us observe
\begin{equation}\label{obs}
\frac{3q}{(4-\delta)q -3} < \, (=, \, >)\, 1 \,\,\Longleftrightarrow \,\, \delta < \, (=, \, >)\, 1-\frac{3}{q}.
\end{equation}
Therefore, to ensure $\delta \geq 0$, for $q \in ]5/3,3]$ we must require $\frac{3q}{(4-\delta)q -3} >1$; in this case, the choice of the index $p$ is restricted to $[\frac{3q}{(4-\delta)q-3},3[$, in view of Eq.\,\eqref{range for p}. On the other hand, in $q>3$, both $\delta <1-3/q$ and $\delta \geq 1-3/q$ is allowed.

{\noindent}
\underline{Case 1}: $q \in ]5/3,3]$. As discussed above, one may choose $p$ arbitrarily in $[\frac{3q}{(4-\delta)q-3},3[$. To maximise $\mathcal{U}(p,q)$ in Eq.\,\eqref{upper bound for delta}, let us take $p=\frac{3q}{(4-\delta)q-3}$; so
\begin{equation*}
\delta \leq (3q-5) \bigg(\frac{3-\frac{3q}{(4-\delta)q-3}}{\frac{6q}{(4-\delta)q-3}}\bigg) = \frac{\big[(3-\delta)q -3\big](3q-5)}{2q},
\end{equation*}
which is equivalent to $\delta \leq 3-\frac{5}{q}$. Together with $\delta \in [0,1[$, we get 
\begin{equation}
\delta \in \Big[ 0, \min\Big\{1, 3-\frac{5}{q}\Big\}\Big] \qquad \text{ for } q \in \big]\frac{5}{3}, 3\big].
\end{equation}

{\noindent}
\underline{Case 2}: $q \in ]3,\infty[$. In this case, if $\delta \leq 1-3/q$, namely $\frac{3q}{(4-\delta)q-3}\leq 1$ by Eq.\,\eqref{obs}, in light of Eq.\,\eqref{range for p} we can take any $p \in ]1,3[$. To maximise $\mathcal{U}(p,q)$ we choose $p\map 1^+$; then Eq.\,\eqref{upper bound for delta} gives us $\delta < 3q-5$, which is automatically true for $\delta \in [0,1[$. On the other hand, if $\delta > 1-3/q$, {\it i.e.}, $\frac{3q}{(4-\delta)q-3}>1$, then by the computation in Case 1 above, we have $\delta \leq 3-5/q$. In summary, 
\begin{equation}
\delta \in \Big[0,1-\frac{3}{q}\Big] \,\bigcup \,\Big[1-\frac{3}{q}, \min\Big\{1, 3-\frac{5}{q}\Big\}\Big] \equiv \Big[ 0, 1\Big] \qquad \text{ for } q \in \big]3,\infty\big[.
\end{equation}
Putting together Cases 1--2 and using $3-\beta=\lambda+2$, we now complete the proof.  \end{proof}

\bigskip
\noindent
{\bf Acknowledgement}.
The author is grateful to Prof.\,Zhongmin Qian for many insightful discussions and generous sharing of ideas, to Prof.\,Gui-Qiang G. Chen for his lasting support, and to Prof.\,Zoran Gruji\'{c} for communicating with us the paper \cite{gr}. Part of this work has been done during Siran Li's stay as a CRM--ISM postdoctoral fellow at  the Centre de Recherches Math\'{e}matiques, Universit\'{e} de Montr\'{e}al and the Institut des Sciences Math\'{e}matiques. The author would like to thank these institutions for their hospitality.

\end{document}